\documentclass[english,10pt]{amsart}
\usepackage{amsmath, amssymb, amsthm, enumerate}
\usepackage[all]{xy}
\usepackage[activeacute]{babel}
\usepackage[alphabetic]{amsrefs}
\newtheorem{thm}[equation]{Theorem}
\makeatletter
\let\c@subsubsection\c@equation
\makeatother

\newtheorem{prop}[equation]{Proposition}

\theoremstyle{remark}
\newtheorem{rmk}[equation]{Remark}

\theoremstyle{definition}

\newcommand{\Hom}{\mathrm{Hom}}
\newcommand{\inthomeff}{\mathbf{hom}^{\mathrm{eff}}}

\newcommand{\spec}[1]{\mathrm{Spec}(#1)}

\newcommand{\sphere}{\mathbf 1}

\newcommand{\DMeff}[1]{DM^{\mathrm{eff}}_{#1}}
\newcommand{\DM}[1]{DM_{#1}}
\newcommand{\northogonal}[2]{DM _{#1}^{\perp}(#2)}
\newcommand{\HINST}[1]{\mathbf{HI}_{#1}}

\numberwithin{equation}{subsection}

\begin{document}

%%%%%%%%%%%%%%%%%%%%%%%%%%%%%%%%%%%%%%%%%%%%
%%%%%%%%%%%%%%%%%%%%%%%%%%%%%%%%%%%%%%%%%%%%
%%%%%%%%%%%%%%%%%%%%%%%%%%%%%%%%%%%%%%%%%%%%

\title{Some remarks on Chow correspondences}

%\dedicatory{}

%%%%%%%%%%%%%%%%%%%%%%%%%%%%%%%%%%%%%%%%%%%%
%%%%%%%%%%%%%%%%%%%%%%%%%%%%%%%%%%%%%%%%%%%%
%%%%%%%%%%%%%%%%%%%%%%%%%%%%%%%%%%%%%%%%%%%%

\author{Pablo Pelaez}
\address{Chebyshev Laboratory\\
St. Petersburg State University\\
14th Line V. O., 29B\\
Saint Petersburg 199178 Russia}
\email{pablo.pelaez@gmail.com}

%%%%%%%%%%%%%%%%%%%%%%%%%%%%%%%%%%%%%%%%%%%%
%%%%%%%%%%%%%%%%%%%%%%%%%%%%%%%%%%%%%%%%%%%%
%%%%%%%%%%%%%%%%%%%%%%%%%%%%%%%%%%%%%%%%%%%%

\subjclass[2010]{Primary 14C15, 14C25, 19E15}

\keywords{Adequate Equivalence Relations,
Albanese Map, Chow Groups, 
Mixed Motives, Orthogonal Filtration, Regular Homomorphisms, Square
Equivalence}

%%%%%%%%%%%%%%%%%%%%%%%%%%%%%%%%%%%%%%%%%%%%
%%%%%%%%%%%%%%%%%%%%%%%%%%%%%%%%%%%%%%%%%%%%
%%%%%%%%%%%%%%%%%%%%%%%%%%%%%%%%%%%%%%%%%%%%

\begin{abstract}
We study, in the context of Voevodsky's triangulated category
of motives, several adequate equivalence relations (in the sense of
Samuel) on the graded  Chow ring $CH^\ast 
(X\times Y)$ for $X$, $Y$ smooth projective varieties over a field.
\end{abstract}

%%%%%%%%%%%%%%%%%%%%%%%%%%%%%%%%%%%%%%%%%%%%%%
%%%%%%%%%%%%%%%%%%%%%%%%%%%%%%%%%%%%%%%%%%%%%%
%%%%%%%%%%%%%%%%%%%%%%%%%%%%%%%%%%%%%%%%%%%%%%
%\thanks{The author was supported by the Russian Science
%Foundation grant 19-71-30002.}
 
\maketitle

%%%%%%%%%%%%%%%%%%%%%%%%%%%%%%%%%%%%%%%%%%%%%%
%%%%%%%%%%%%%%%%%%%%%%%%%%%%%%%%%%%%%%%%%%%%%%
%%%%%%%%%%%%%%%%%%%%%%%%%%%%%%%%%%%%%%%%%%%%%%
\section{Introduction}  \label{sec.introd}

\subsection{}
Let $k$ be an algebraically closed base field, and $X$ a smooth projective
variety over $k$ of dimension $d$.  Let $CH^m(X)$ be the group
of codimension $m$ algebraic cycles of $X$ modulo rational equivalence, 
$1\leq m\leq d$, and $CH^m_{\mathrm{alg}}(X)\subseteq CH^m(X)$ be
the subgroup of algebraic cycles which are algebraically equivalent to zero.
We will write $\mathrm{alb}_X: CH^d_{\mathrm{alg}}(X)\rightarrow 
\mathrm{Alb}(X)(k)$ for the Albanese map.  We fix a $k$-rational point
$x_0\in X(k)$, and consider a canonical map $f:X\rightarrow
\mathrm{Alb}(X)$ such that $f(x_0)=0\in \mathrm{Alb}(X)(k)$.

Let  $\Lambda \in CH^{d+n}(X\times Y)$ be a Chow corrrespondence,
where $Y$ is a smooth projective variety of dimension $d_Y$, $d_Y\geq n\geq0$, and consider the following homomorphism of abelian groups: 
\begin{align*}
\psi : & CH^{d_Y -n}(Y) \rightarrow
\mathrm{Alb}(X)(k)\\
& \gamma  \mapsto \mathrm{alb}_X(\Lambda (\gamma)
-\mathrm{deg}(\Lambda(\gamma))[x_0]),
\end{align*}
with  $\Lambda (\gamma)=p_{X\ast}(p_Y^\ast(\gamma)\cdot \Lambda)\in CH^d(X)$, where
$p_X:Y\times X \rightarrow X$, $p_Y:Y\times X \rightarrow Y$
are the projections.  

We  observe that $\psi$
restricted to
$CH^{d_Y -n}_{\mathrm{alg}}(Y)$
is a cycle induced regular 
homomorphism in the sense of Lieberman \cite{MR238857}*{p. 1194},
\eqref{def.reg.hom}.
Noteworthy examples are, in the case $k=\mathbb C$ and
$A$ an abelian variety, Weil's \cite{MR0050330}*{\S IV.27}
Abel-Jacobi maps $\psi_n : CH^n_{\mathrm{alg}}(A)\rightarrow
J^n_a(A)(\mathbb C)$ \cite{MR238857}*{Prop. 6.7, Thm. 6.5}
into the algebraic part  of the Weil-Griffiths
intermediate Jacobian $J^n(A)$  \cite{MR0050330}*{\S IV.24-26},
\cite{MR0233825}*{Ex. 2.1, Thm. 2.54}.

Our  goal is to characterize, in the context of Voevodsky's
triangulated category of motives $\DM{k}$, those correspondences
$\Lambda \in CH^{d+n}(X\times Y)$ for which the homomorphism
$\psi$ is zero.  Namely, we give a criterion, in terms of the
functor of points $\mathcal A \in \DM{k}$ of the Albanese variety
$\mathrm{Alb}(X)$, and its orthogonal cover 
$\theta ^{\mathcal A}_{-1}:
bc_{\leq -1} (\mathcal A)\rightarrow \mathcal A$ in $\DM{k}$
\cite{MR3614974}, \cite{MR4486247}, which determines those
correspondences $\Lambda$ for which the homomorphism $\psi$
is identically zero.  Furthermore, we also show that the orthogonal
cover $\theta ^{\mathcal A}_{-1}$ detects those correspondences
$\Lambda$ which are in $CH^{d+n}_{\mathrm{alg}^{\ast 2}}(X\times Y)$
(square equivalent to zero \eqref{sss.prod-eqrel})
or in $CH^{d+n}_{H^{\ast 2}}(X\times Y)$ (the second step of H. Saito's
filtration \eqref{sss.HSai-fil}).

The paper is organized as follows:  in section \ref{sec.preel},
 we establish the notation
and collect some results that will be used in the rest of the paper.
In section \ref{s.ad-eqrel}, we discuss, in the context
of Voevodsky's triangulated category of motives,
adequate equivalence relations
on algebraic cycles on smooth projective varieties.  In section 
\ref{s.mainres}, we prove our main result \eqref{thm.main}
and show that the orthogonal filtration detects
correspondences which are square equivalent to zero
\eqref{thm.square-eq} or which are in $H^{\ast 2}$,
the second step of
H. Saito's filtration, \eqref{thm.square-homeq}.

\section{Preliminaries}  \label{sec.preel}

In this section we fix the notation that will be used in the
rest of the paper and recall some results from the literature
which will be necessary in the sequel.  The results of this
section are not original.

\subsection{Definitions and Notation}	\label{subsec.defandnots}		

We consider an arbitrary base field $k$, and let $Sch_k$ be the category
of $k$-schemes of finite type.  
If $X$, $Y\in Sch_k$, we will write $X\times Y$ for
$X\times _{\spec k}Y$.
Let
$\pi _X :X\rightarrow \spec{k}$ be the structure map of $X\in Sch_k$,
and $X(k)$  its set of $k$-points. 
Given
$x\in X(k)$, we will  also write 
$x:\spec{k}\rightarrow X$ for the corresponding map in $Sch_k$.

Let $Sm_k$ be the full
subcategory of $Sch_k$ which consists of smooth $k$-schemes
considered as a site with the Nisnevich topology.  
We will write $SmProj_k$ 
for the full subcategory of $Sm_k$ consisting of  smooth projective
varieties.
When $X\in SmProj_k$ of dimension $d$ and
 $x\in X(k)$, let $[x] \in CH^d(X)$ be the
zero-cycle associated to $x$ as a closed subscheme of $X$.  
Given $\alpha \in CH^d(X)$, we will write
$\mathrm{deg} (\alpha)\in \mathbb Z$ for $\pi _{X\ast}(\alpha)\in
CH^0({\spec{k}})\cong \mathbb Z$.

We will use freely the language of triangulated categories
\cite{MR1812507}, \cite{MR751966}.  
We will write $[1]$ (resp. $[-1]$) for the suspension (resp. 
desuspension) functor in a triangulated category; and for $n>0$, the
composition of $[1]$ (resp. $[-1]$) iterated $n$-times will be $[n]$
(resp. $[-n]$).  If $n=0$, then $[0]$ will be the identity functor.

In all the categories under consideration, $0$ will be the zero
object (if it exists), and $\cong$ will denote that a map (resp. a functor)
is an isomorphism (resp. an equivalence of categories).

\subsection{Voevodsky's triangulated category of motives}  \label{ss.DMdef}

Let $Cor_k$ be the Suslin-Voevodsky category of finite
correspondence over $k$ \cite{MR1744945}, \cite{MR3590347},
and $Shv^{tr}$ be the category of Nisnevich sheaves with
transfers (which is an abelian category \cite{MR2242284}*{13.1}).
Given $X\in Sm_k$, we will write
 $\mathbb Z _{tr}(X)$ for the Nisnevich
sheaf with transfers represented by $X$ \cite{MR2242284}*{2.8 and 6.2}.

We will write 
$K(Shv ^{tr})$ for the category of unbounded chain complexes
on $Shv^{tr}$ equipped with the injective model structure
\cite{MR1780498}*{Prop. 3.13}, and  $D(Shv^{tr})$ for its
homotopy category.  Let $K^{\mathbb A ^1}(Shv ^{tr})$ be
the left Bousfield localization \cite{MR1944041}*{3.3} of $K(Shv^{tr})$
with respect to the set of maps 
$\{ \mathbb Z _{tr}(X\times _k \mathbb A ^1)[n] \rightarrow
\mathbb Z _{tr}(X)[n] : X\in Sm_k; n\in \mathbb Z \}$ induced by the
projections $p: X\times _k \mathbb A ^1 \rightarrow X$.
Voevodsky's triangulated category of effective motives $\DMeff{k}$
is the homotopy category of $K^{\mathbb A ^1}(Shv ^{tr})$
\cite{MR1764202}.

Let $T\in K^{\mathbb A ^1}(Shv ^{tr})$ be the
chain complex $\mathbb Z _{tr}(\mathbb G _m)[1]$ 
\cite{MR2242284}*{2.12}, where $\mathbb G _m$ is the $k$-scheme
$\mathbb A ^1 \backslash \{ 0 \}$ pointed by $1$.  Consider the category
$Spt _T (Shv ^{tr})$ of symmetric $T$-spectra on
$ K^{\mathbb A ^1}(Shv ^{tr})$ equipped with the model
structure defined in \cite{MR1860878}*{8.7 and 8.11},
\cite{MR2438151}*{Def. 4.3.29}.  Voevodsky's triangulated
category of motives $\DM{k}$ is the homotopy category of
$Spt _T (Shv ^{tr})$ \cite{MR1764202}.

If $X\in Sm_k$, let $M(X)$ be the image of 
$\mathbb Z _{tr}(X) \in D(Shv ^{tr})$ under the $\mathbb A ^1$-localization
map $D(Shv ^{tr})\rightarrow \DMeff{k}$.  We will write
$\Sigma ^\infty : \DMeff{k}\rightarrow \DM{k}$ for the suspension functor
\cite{MR1860878}*{7.3}, we will abuse notation and simply write $E$ for
$\Sigma ^\infty E$, $E\in \DMeff{k}$.  

We recall that $\DMeff{k}$ and $\DM{k}$ are tensor
triangulated categories \cite{MR2438151}*{Thm. 4.3.76 and Prop. 4.3.77}
with unit $\sphere = M(\spec{k})$.  Let $E(1)$ be
$E\otimes M(\mathbb G _m)[-1]$, $E\in \DM{k}$ and inductively
$E(n)=(E(n-1))(1)$, $n\geq 0$.  We observe that the functor
$\DM{k}\rightarrow \DM{k}$, $E\mapsto E(1)$ is an equivalence of
categories \cite{MR1860878}*{8.10}, \cite{MR2438151}*{Thm. 4.3.38};
let $E\mapsto E (-1)$ be its inverse, and inductively
$E(-n)=(E(-n+1))(-1)$, $n>0$.  By convention $E(0)=E$ for
$E\in \DM{k}$.

\subsection{Voevodsky's slice filtration}  \label{ss.sf}
We recall that $\DM{k}$ is a compactly generated triangulated
category \cite{MR1308405}*{Def. 1.7} with the following set of
compact generators \cite{MR2438151}*{Thm. 4.5.67}:

\begin{align}  \label{eq.DMgens}
\mathcal G _{\DM{k}} = \{ M (X)(r) : X\in Sm_k; r\in \mathbb Z \} .
\end{align}

Let $m\in \mathbb Z$, and consider:

\begin{align}  \label{eq.DMeffgenstw}
\mathcal G ^{\mathrm{eff}}(m) = \{ M(X)(r): X\in Sm_k; r\geq m \}
\subseteq \mathcal G _{\DM{k}}.
\end{align}
\subsubsection{}  \label{def.slicefil}
We define $\DMeff{k} (m)$ to be the smallest full triangulated
subcategory of $\DM{k}$ which contains
 $\mathcal G ^{\mathrm{eff}}(m)$
\eqref{eq.DMeffgenstw} and is closed under arbitrary (infinite)
coproducts.
The slice filtration \cite{MR1977582}, \cite{MR2600283}*{p. 18},
\cite{MR2249535} is the following tower of triangulated subcategories
of $\DM{k}$:

\begin{align}  \label{eq.slice.filtration}
\cdots \subseteq \DMeff{k} (m+1) \subseteq
\DMeff{k} (m) \subseteq \DMeff{k} (m-1)
\subseteq \cdots
\end{align}
We notice that the
inclusion
$i_m:\DMeff{k} (m)\rightarrow \DM{k}$ admits a right adjoint
$r_m : \DM{k}\rightarrow \DMeff{k} (m)$ which is also a triangulated
functor \cite{MR1308405}*{Thm. 4.1}.  

The
$(m-1)$-effective cover of the slice filtration is defined to be
$f_m = i_m \circ r_m : \DM{k}\rightarrow \DM{k}$ 
\cite{MR1977582}, \cite{MR2600283}*{p. 18},
\cite{MR2249535}.  We will write $\epsilon _m: f_m \rightarrow id$,
for the counit of the adjunction $(i_m, r_m)$.

We recall that the
suspension functor $\Sigma ^\infty : \DMeff{k} \rightarrow
\DM{k}$ induces an equivalence of categories between $\DMeff{k}$
and $\DMeff{k} (0)$ \cite{MR2804268}*{Cor. 4.10} (in case $k$
is non-perfect of characteristic $p$, it is necessary to consider 
$\mathbb Z [\frac{1}{p}]$-coefficients \cite{MR3590347}*{Cor. 4.13, Thm.
4.12 and Thm. 5.1}).

\subsection{The orthogonal filtration}
Let $m\in \mathbb Z$.  We will write $\northogonal{k}{m}$ for the full
subcategory of $\DM{k}$ consisting of objects $E\in \DM{k}$ such that
for every $G\in \DMeff{k} (m)$ \eqref{def.slicefil}:
$\Hom _{\DM{k}} (G,E)=0$.

\subsubsection{}
The orthogonal filtration \cite{MR3614974}*{3.2.1}
(called birational in \cite{MR3614974}*{3.2}) consists of
the following tower of  triangulated subcategories of $DM_k$:
\begin{align}  \label{eq.orthogonal.filtration}
\cdots \subseteq \northogonal{k}{m-1}\subseteq \northogonal{k}{m} \subseteq 
\northogonal{k}{m+1} \subseteq \cdots
\end{align}

\subsubsection{}  \label{def.orthogonal.adj}
We recall 
that the  inclusion $j_m:\northogonal{k}{m}\rightarrow DM_k$ admits
a right adjoint $p_m: DM_k \rightarrow \northogonal{k}{m}$ which is also
a triangulated functor \cite{MR1308405}*{Thm. 4.1},
\cite{MR3614974}*{3.2.2}.  Let $bc_{\leq m}=j_{m+1}\circ p_{m+1}:DM_k \rightarrow
DM_k$,
where $bc$ stands for birational cover \cite{MR3614974}*{3.2}.
The counit of the adjunction, $\theta _m :bc_{\leq m} \rightarrow id$,
 satisfies a universal property \cite{MR3614974}*{3.2.4},
which combined with the inclusions \eqref{eq.orthogonal.filtration} induces
a canonical natural transformation $bc_{\leq m}\rightarrow
bc_{\leq m+1}$.

\subsection{}  \label{ss.nt.htstr}
We will consider Voevodsky's homotopy $t$-structure
$((\DMeff{k})_{\geq 0}, (\DMeff{k})_{\leq 0})$ in $\DMeff{k}$
\cite{MR1764202}*{p. 11}.  We will use the homological notation
for $t$-structures \cite{MR2438151}*{\S 2.1.3}, \cite{MR2735752}*{\S 1.3},
and write $\tau _{\geq m}$, $\tau _{\leq m}$ for the truncation
functors and $\mathbf{h}_m = [-m](\tau _{\leq m} \circ \tau _{\geq m})$.
We will write  $\HINST{k}$ for the abelian category of homotopy invariant
Nisnevich sheaves with transfers on $Sm_k$, which is the heart
of the homotopy $t$-structure in $\DMeff{k}$.

\subsubsection{}  \label{ss.nt.abvar}
Consider an abelian variety
$A\in SmProj_k$.  We will write
$\mathcal A \in \HINST{k}$ for the functors of points of $A$ considered
as a homotopy invariant Nisnevich sheaf with transfers
\cite{MR1957261}*{Lem. 3.2, Rmk. 3.3}.  Given a map
$f:Y\rightarrow A$ in $Sm_k$,  we will  write
$\mathcal A
_f :M(Y)\rightarrow \mathcal A$ in $\DMeff{k}$ for the image of $f$
under the isomorphism
$\Gamma (Y, \mathcal A)\cong  \Hom_{\DMeff{k}}(M(Y), \mathcal A)$
\cite{MR1764202}*{Prop. 3.1.9 and 3.2.3}.

\subsubsection{}  \label{prop.kpts.sep}
With the notation and conditions of \eqref{ss.nt.abvar}.  Assume
that the base field $k$ is algebraically closed, let
$\mathcal F \in \HINST{k}$ \eqref{ss.nt.htstr}, and consider a map
$f:\mathcal F \rightarrow \mathcal A$ in $\HINST{k}$.  
The following is well known \cite{2025arXiv250915920H}*{2.4.2}:
$f=0$ if and only if the induced map
$f_k =0: \Gamma (\spec{k}, \mathcal F)
\rightarrow \Gamma (\spec{k}, \mathcal A)$.

\section{Adequate equivalence relations and Voevodsky's motives}
\label{s.ad-eqrel}

\subsection{}  \label{not.act.cor}
Let  $X$, $Y\in SmProj_k$ of dimension $d$, $d_Y$, respectively,
and $\Lambda \in CH^l (Y\times X)$.
Given $\gamma \in CH^{m}(Y)$,
we will write $\Lambda (\gamma)\in CH^n(X)$, $n=l+m-d_Y$, for
$p_{X\ast}(p_Y^\ast(\gamma)\cdot \Lambda)$ where
$p_X:Y\times X \rightarrow X$, $p_Y:Y\times X \rightarrow Y$
are the projections.

We will consider the isomorphism \cite{MR1883180}, $Z\in Sm_k$:
\begin{align}  \label{eq.Voev.comp}
\Hom _{\DM{k}}(M(Z), \sphere (m)[2m])\cong CH^m(Z),
\end{align}
as an identification.
\subsubsection{}  \label{sss.ident.chgps}
Given
$\alpha \in CH^m(Z)$, $Z\in Sm_k$, we will
also write $\alpha: M(Z)\rightarrow \sphere (m)[2m]$ for the
map in $\DM{k}$ corresponding to $\alpha$ under \eqref{eq.Voev.comp}.

\subsubsection{}   \label{sss.dual.corr}
With the notation and conditions of \eqref{not.act.cor}.  
Consider $\Lambda : M(Y)\otimes M(X)
\rightarrow \sphere (l)[2l]$ in $\DM{k}$, \eqref{sss.ident.chgps}.
Dualizing $M(Y)$ \cite{MR1764202}*{Thm. 4.3.7},
\cite{MR2399083}*{Prop. 6.7.1 and \S 6.7.3}  we obtain the
following map in $\DM{k}$:
\begin{align*}
\xymatrix{\Lambda _Y : M(X) \ar[r]& M(Y) (-d_Y+l)[-2d_Y+2l]}
\end{align*}

\subsection{Adequate equivalence relations}  \label{ss.ad.eq.rel}
An adequate equivalence relation E on algebraic cycles on
$SmProj_k$ \cite{MR116010}*{\S 1},
consists  of subgroups \cite{MR1189891}*{Prop. 1.2.1},
\cite{MR1744947}*{Lem 1.3}:
\begin{align*}
  \{ CH^n_E(X)\subseteq CH^n(X): X\in SmProj_k, n\geq 0 \}
\end{align*}
such that for every $\Lambda \in CH^l (Y\times X)$;
$X$, $Y\in SmProj_k$, and every $\gamma \in CH^m_E(Y)$:
$\Lambda (\gamma) \in CH^n_E(X)$ \eqref{not.act.cor}.

\subsubsection{}   \label{sss.commacat.equivrel}
With the notation and conditions of \eqref{not.act.cor} and
\eqref{eq.Voev.comp}-\eqref{sss.dual.corr}.
Consider a map $h: M \rightarrow \sphere$ in $\DM{k}$.
Then we obtain an adequate equivalence relation $E_h$
\eqref{ss.ad.eq.rel} by defining $CH^n_{E_h}(X)$ as the
image of the map induced by $h$:
\begin{align*}
h_\ast: \Hom _{\DM{k}}(M(X)(-n)[-2n], M) \rightarrow
\Hom _{\DM{k}}(M(X)(-n)[-2n], \sphere) \overset{1}{\cong}
CH^n(X)
\end{align*}
where the isomorphism (1) follows by adjointness and \eqref{eq.Voev.comp}, \cite{MR1883180}.

In effect, let $\gamma \in CH^m_{E_h}(Y)$, $\Lambda \in CH^l (Y\times X)$,
$n=l+m-d_Y$, and consider $\beta = \Lambda _Y (-n)[-2n]: M(X)(-n)[-2n]
\rightarrow M(Y)(-m)[-2m]$ in $\DM{k}$ \eqref{sss.dual.corr}.  Thus,
$\gamma (-m)[-2m]: M(Y)(-m)[-2m]
\rightarrow \sphere$ \eqref{sss.ident.chgps} is in the image of $h_\ast$, so,
$\gamma (-m)[-2m]\circ \beta: M(X)(-n)[-2n]\rightarrow \sphere$ is also
in the image of $h_\ast$.  Then, by
combining Lieberman's lemma \cite{MR1644323}*{Prop. 16.1.1} and
\cite{MR1764202}*{Prop. 2.1.4, Thm. 3.2.6} we conclude that
under the isomorphism $\Hom _{\DM{k}}(M(X)(-n)[-2n], \sphere)
\cong CH^n(X)$, $\Lambda (\gamma)$ \eqref{not.act.cor} corresponds
to $\gamma (-m)[-2m]\circ \beta$, and the assertion follows. 

\subsubsection{}
Conversely, given an adequate equivalence relation 
$E$ \eqref{ss.ad.eq.rel} on algebraic cycles on $SmProj_k$,
there exists a (non-unique) map $h:M\rightarrow \sphere$ in $\DM{k}$ such that $E_h=E$ \eqref{sss.commacat.equivrel}.  

In effect, let:
\begin{align*}
	M=\bigoplus _{X\in SmProj_k} \bigoplus _{\alpha \in CH^n_E(X), 
	n\geq 0} 
	M(X)(-n)[-2n]
\end{align*}
and consider the map $h:M\rightarrow \sphere$ in $\DM{k}$ induced by
$\alpha (-n)[-2n]:M(X)(-n)[-2n] \rightarrow \sphere ,$ 
\eqref{sss.ident.chgps} on each
direct sumand of $M$.

\subsubsection{}
Consider the map in $\DM{k}$ given by
the counit $\theta _{-1}$ \eqref{def.orthogonal.adj} in the orthogonal filtration:
\begin{align*}
 \theta _{-1}^\sphere: bc_{\leq -1}(\sphere ) \rightarrow \sphere
\end{align*}
Then, it follows from \cite{MR3614974}*{5.3.6}, \cite{pelaez2025incidenceequivalenceblochbeilinsonfiltration}*{2.5.12} that
the equivalence relation $E_{\theta _{-1}^\sphere}$ 
\eqref{sss.commacat.equivrel} on algebraic cycles on $SmProj_k$ is
 numerical equivalence \cite{MR116010}*{2.3}.
 
\subsubsection{Product of equivalence relations \cite{MR1189891}*{1.3}}
\label{sss.prod-eqrel}
With the notation and conditions of \eqref{not.act.cor}.
Let $E$, $E'$ be adequate equivalent relations on algebraic cycles
on $SmProj_k$ \eqref{ss.ad.eq.rel}.  The product, $E\ast E'$, of
$E$ and $E'$ is the adequate equivalent relation defined
as follows:
$\alpha \in CH^n_{E\ast E'}(X)$ if there exist $Y\in SmProj_k$ 
and $\beta _i \in CH^{n_i}_E (Y\times X)$,
$\gamma _i \in CH^{m_i}_{E'} (Y\times X)$, $i=1,\ldots , r$;
$n=n_i + m_i -d_Y$,
such that $\alpha = \sum _i p_{X\ast}(\beta _i \cdot \gamma _i)$, 
\cite{MR1189891}*{Rmk. 1.6.1}.

Let $E=\mathrm{alg}$ denote the adequate equivalence relation
on algebraic cycles on $SmProj_k$ given by algebraic
equivalence \cite{MR116010}*{2.2}.  Then, by \cite{MR1189891}*{Ex. 1.9},
$\mathrm{alg}^{\ast l}$, $l\geq 2$, is Samuel's $l$-cubic
equivalence \cite{MR116010}*{2.4}.

\subsubsection{H. Saito's filtration \cite{MR1189891}*{Ex. 1.8}}
\label{sss.HSai-fil}
With the notation and conditions of \eqref{not.act.cor}.
Let $H$ be a Weil cohomology theory \cite{MR0292838}*{1.2} on
$SmProj_k$, with cycle class map $\mathrm{cl}_X: CH^n(X)
\rightarrow H^{2n}(X)$, $n\geq 0$.  
We will abuse notation and also write $H$
for the adequate equivalence relation given by 
\emph{homological equivalence} \cite{MR0292838}*{Prop. 1.2.3}.
Namely, $CH^n_H(X)\subseteq CH^n(X)$ is defined as
the kernel of the cycle class map $\mathrm{cl}_X$.

We consider the products $H^{\ast l}$, $l\geq 1$, of adequate equivalence
relations \eqref{sss.prod-eqrel}.  Thus, we obtain a descending
filtration for the Chow groups:
\begin{align}  \label{diag.HSai-fil}
\ldots \subseteq CH^n_{H^{\ast l+1}}(X) \subseteq
CH^n_{H^{\ast l}}(X) \subseteq \ldots \subseteq CH^n_{H}(X),
\end{align}
The filtration \eqref{diag.HSai-fil}
is of particular interest since it is a natural candidate for a
Bloch-Beilinson filtration \cite{MR1744947}*{Thm. 4.1}.

\subsubsection{Regular homomorphisms}	\label{def.reg.hom}
With the notation and conditions of \eqref{ss.nt.abvar}.  Furthermore,
we assume that the base field $k$ is algebraically closed.
Let $X\in SmProj_k$ of dimension $d$,
$A \in SmProj_k$ an
abelian variety and $1\leq n \leq d$. A group homomorphism
$\psi : CH^n_{\mathrm{alg}}(X) \rightarrow A(k)$ is
\emph{regular} \cite{MR116010}*{p. 477}, if for every $Y\in SmProj _k$,
$\Lambda \in CH^n(Y\times X)$, $y_0 \in Y(k)$;   there exists a map
$\psi _{\Lambda, y_0}: Y\rightarrow A$ in $Sm_k$, such that
for $y\in Y(k)$: $\psi _{\Lambda, y_0}(y)
=\psi(\Lambda ([y]-[y_0]))$ \eqref{not.act.cor}.

Let $\mathrm{alb}_X : CH^d_{\mathrm{alg}}(X)
\rightarrow \mathrm{Alb}(X) (k)$  be the Albanese map, which is a
regular homormorphism by the naturality of the Albanese map.

Consider a regular homomorphism
$\psi : CH^n_{\mathrm{alg}}(X) \rightarrow A(k)$,
where $A$ is an abelian variety of dimension $d_A$.  
We say that $\psi$ is \emph{cycle induced}
\cite{MR238857}*{p. 1194}, if there exist $\Lambda \in 
CH^{d+d_A-n} (X\times A)$,  and
an integer $r\geq 1$ such that for every $\alpha \in CH^n_{\mathrm{alg}}
(X)$: $r\psi(\alpha)=\mathrm{alb}_A (\Lambda (\alpha))\in A(k)$,
\eqref{not.act.cor}.

\section{The orthogonal filtration for the functor of points of an
abelian variety}  \label{s.mainres}

\subsection{}  \label{ss.results}
With the notation and conditions of \eqref{ss.nt.abvar}.  Furthermore,
we assume that the base field $k$ is algebraically closed. 
Let $X\in SmProj_k$ of dimension $d$, 
 $x_0\in X(k)$ and  $f:X \rightarrow A=\mathrm{Alb}(X)$ be
the canonical map into the Albanese variety such that $f(x_0)=0$.
This map is classified by a morphism in $\DM{k}$ \eqref{ss.nt.abvar}:
\begin{align}  \label{diag.mot-alb}
\mathcal A _f :M(X) \rightarrow \mathcal A 
\end{align}

Let $\Lambda \in CH^{d+n}(Y\times X)$, $Y\in SmProj_k$ of
dimension $d_Y$, $d_Y\geq n\geq 0$.
Consider $\Lambda : M(Y)\otimes M(X)
\rightarrow \sphere (d+n)[2d+2n]$ in $\DM{k}$, \eqref{sss.ident.chgps}.
Dualizing $M(X)$ \cite{MR1764202}*{Thm. 4.3.7},
\cite{MR2399083}*{Prop. 6.7.1 and \S 6.7.3}  we obtain the
following map in $\DM{k}$:
\begin{align}  \label{diag.corr-cyc-ind}
\Lambda _X : M(Y)(-n)[-2n] \rightarrow M(X)
\end{align}

Combining \eqref{diag.mot-alb}, \eqref{diag.corr-cyc-ind} and the
counit $\theta _{-1}$ \eqref{def.orthogonal.adj} in the orthogonal filtration,
we obtain the solid arrows in the following diagram in $\DM{k}$:
\begin{align}  \label{diag.mainthm}
\begin{split}
\xymatrix{ 
&& bc_{\leq -1}(\mathcal A) \ar[d]^-{\theta _{-1}^\mathcal A}\\
M(Y)(-n)[-2n] \ar[r]_-{\Lambda _X} \ar@{-->}[urr]^-{\Lambda '}
& M(X) \ar[r]_-{\mathcal A _f}& \mathcal A}
\end{split}
\end{align}

\begin{thm}  \label{thm.main}
With the notation and conditions of \eqref{ss.results} and
\eqref{diag.mot-alb}-\eqref{diag.mainthm}.
There exist a map $\Lambda ' : M(Y)(-n)[-2n]\rightarrow
bc _{\leq -1}(\mathcal A)$ in $\DM{k}$
such that \eqref{diag.mainthm} commutes
if and only if the following condition holds:

\subsubsection{}  \label{sss.main-cond}
For every $\gamma \in CH^{d_Y-n}(Y)$:
$\mathrm{alb}_X(\Lambda (\gamma )-\mathrm{deg} 
(\Lambda (\gamma ))[x_0])
=0 \in \mathrm{Alb}(X)(k)$.
\end{thm}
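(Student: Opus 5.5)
Since $\theta_{-1}^{\mathcal A}$ is a counit, I would first recast the lifting problem as a vanishing statement. There is a distinguished triangle
\[
bc_{\leq -1}(\mathcal A)\xrightarrow{\theta_{-1}^{\mathcal A}}\mathcal A\rightarrow \mathcal A'\rightarrow bc_{\leq -1}(\mathcal A)[1],
\]
and, by the universal property of the orthogonal cover, $\mathcal A'$ is the part of $\mathcal A$ detected by $\DMeff{k}(0)$. Hence a lift $\Lambda'$ exists if and only if the composite $M(Y)(-n)[-2n]\rightarrow \mathcal A\rightarrow \mathcal A'$ vanishes. Alternatively, I would use the adjunction $(j_0,p_0)$ to identify $\Hom(M(Y)(-n)[-2n],\mathcal A)$ modulo the image of $\theta_{-1*}$ with $\Hom(f_0$-part$,\ldots)$. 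The concrete approach I would follow is to use duality: $M(Y)(-n)[-2n]\cong M(Y)^\vee(d_Y-n)[2d_Y-2n]$. Maps from $M(Y)^\vee(d_Y-n)[2d_Y-2n]$ to $\mathcal A$ correspond to maps $\sphere(d_Y-n)[2d_Y-2n]\rightarrow M(Y)\otimes\mathcal A$, i.e.\ to $\Hom(\sphere(d_Y-n)[2d_Y-2n], M(Y)\otimes \mathcal A)$. Known computations of the orthogonal filtration (\cite{MR3614974}, \cite{MR4486247}) identify $bc_{\leq -1}$ of an object $E$ with the part that receives no maps from $\DMeff{k}(0)$-objects. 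Combining this with the fact that $M(Y)(d_Y-n)$-type generators control things, the obstruction to lifting should be the restriction of the composite to ``points'' of codimension $d_Y-n$ cycles.

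Concretely, the key steps are as follows. (1) Show that $\mathcal A\rightarrow \mathcal A'$ factors through $\mathcal A$ viewed in the heart, and that for $G\in \DMeff{k}(0)$ compact, maps $G\rightarrow\mathcal A$ factor through $\theta_{-1}$ iff they vanish after applying $\mathbf h_0$ and evaluating on $k$-points, using \eqref{prop.kpts.sep}. (2) Compute the composite $\mathcal A_f\circ\Lambda_X$ after twisting back: for each $\gamma\in CH^{d_Y-n}(Y)$, viewed by \eqref{sss.ident.chgps} and duality as a map $\sphere\rightarrow M(Y)(-n)[-2n]\otimes(\text{twist})$, or more simply via Lieberman's lemma as in \eqref{sss.commacat.equivrel}, the composite $M(Y)(-n)[-2n]\rightarrow M(X)$ precomposed with $\gamma^\vee:\sphere(0)\cdot\rightarrow M(Y)(-n)[-2n]$ is the zero-cycle $\Lambda(\gamma)$ viewed as $\sphere\rightarrow M(X)$. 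Composing with $\mathcal A_f$ then gives the point $\mathrm{alb}_X(\Lambda(\gamma)-\deg(\Lambda(\gamma))[x_0])$, by the standard description of $\Hom(\sphere,M(X))\rightarrow \Hom(\sphere,\mathcal A)=A(k)$ as the Albanese map, using $f(x_0)=0$. (3) Conclude: the composite lifts iff its image in $\mathcal A'$ vanishes iff (by step 1) all these $k$-point evaluations vanish, which is \eqref{sss.main-cond}.

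The main obstacle is step (1): I need to show that the obstruction is detected exactly by the maps $\sphere\rightarrow M(Y)(-n)[-2n]$ coming from $CH^{d_Y-n}(Y)=CH_n(Y)$, i.e.\ by the zero-th slice / birational part. For this I would use that $\mathcal A'$ lies in the subcategory generated from $\DMeff{k}(0)$. By \cite{MR3614974} for $\mathcal A$ a homotopy invariant sheaf, $\mathcal A'\cong$ the sheaf $\mathcal A$ restricted to its value on fields (a birational sheaf), which here is the constant sheaf $A(k)$ since $k$ is algebraically closed. Hence maps $M(Y)(-n)[-2n]\rightarrow\mathcal A'$ are computed by $CH_n(Y)\otimes$-type pairings with $A(k)$. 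Then injectivity of this evaluation follows from \eqref{prop.kpts.sep}.
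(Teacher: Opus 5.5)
Your opening reduction is sound, and once unwound it is the paper's. A lift exists iff the composite $W\to\mathcal A\to\mathcal A'$ vanishes, where $W=M(Y)(-n)[-2n]$. Since $\mathcal A'$ is right orthogonal to $\northogonal{k}{0}$ and the cone of $\epsilon_0\colon f_0W\to W$ lies in $\northogonal{k}{0}$, one gets $\Hom(W,\mathcal A')\cong\Hom(f_0W,\mathcal A)$. So the obstruction is $\mathcal A_f\circ\Lambda_X\circ\epsilon_0$, as in \cite{MR3614974}*{5.3.2}. Your step (2) is also the paper's computation \eqref{action.corr-alb}.

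The gap is the step you flag as the main obstacle, and your way of closing it is wrong. The cone $\mathcal A'$ is not the constant sheaf $A(k)$. Because $bc_{\leq -1}(\mathcal A)$ receives no maps from $\DMeff{k}(0)$, one has $\Hom(G,\mathcal A')\cong\Hom(G,\mathcal A)$ for every effective $G$. Hence $\Hom(M(Z),\mathcal A')=\mathrm{Mor}(Z,A)$ and $f_0(\mathcal A')\cong\mathcal A$. Passing to values on fields does not help either: $\mathcal A$ is already birational, and $\mathcal A(k(Z))=\mathrm{Mor}(Z,A)\neq A(k)$. So your description of $\Hom(W,\mathcal A')$ via pairings of $CH_n(Y)$ with $A(k)$ is unfounded.

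Step (1) is also misformulated, for three reasons. For $G\in\DMeff{k}(0)$, factoring through $\theta_{-1}$ just means $G\to\mathcal A$ is zero. Moreover, $W\notin\DMeff{k}(0)$ when $n>0$. Finally, the criterion ``zero iff zero on $\mathbf h_0$ at $k$-points'' only holds for $G\in(\DMeff{k})_{\geq 0}$. In general $\Hom(G,\mathcal A)$ also receives contributions from $\Hom(\mathbf h_{-p}G,\mathcal A[p])$ for $p>0$.

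The missing idea is to identify the object carrying the obstruction and prove it is connective for the homotopy $t$-structure. Set $E=f_0(W)$. Then $E\cong f_n(M(Y))(-n)[-2n]\cong\inthomeff(\sphere(n)[2n],M(Y))$, and $\Hom(M(Z)[r],E)\cong CH^{d_Y-n}(Y\times Z,r)=0$ for $r<0$. Hence $E\in(\DMeff{k})_{\geq 0}$. The argument then runs as follows.
\begin{enumerate}[(i)]
\item Since $\mathcal A$ lies in the heart, $\mathcal A_f\circ\Lambda_X\circ\epsilon_0$ factors uniquely as $e\circ\sigma_0$ for some $e\colon\mathbf h_0E\to\mathcal A$ in $\HINST{k}$.
\item By \eqref{prop.kpts.sep}, $e=0$ iff $e$ vanishes on $k$-points.
\item $\Hom(\sphere,E)\cong\Gamma(\spec{k},\mathbf h_0E)$, since $\spec{k}$ is a Nisnevich point and $E$ is connective.
\item $\Hom(\sphere,E)\cong\Hom(\sphere,W)=CH^{d_Y-n}(Y)$, since $\sphere$ is effective. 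Together with (iii), this identifies the evaluation in (ii) with your step (2).
\end{enumerate}
Without this connectivity, your argument that \eqref{sss.main-cond} implies the existence of $\Lambda'$ does not go through. The converse direction is fine as it stands, since any map $\sphere\to bc_{\leq -1}(\mathcal A)$ is zero.
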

\begin{proof}
By \cite{MR3614974}*{5.3.2}, the existence of $\Lambda '$ making
\eqref{diag.mainthm} commute is equivalent to the following
composition being zero in $\DM{k}$ \eqref{ss.sf}:
\begin{align*}
\xymatrix{ f_0(M(Y)(-n)[-2n]) \ar[d]_-{\epsilon_0} \ar@{-->}[drr]^-0
&& \\
M(Y)(-n)[-2n] \ar[r]_-{\Lambda _X}
& M(X) \ar[r]_-{\mathcal A _f}& \mathcal A}
\end{align*}
Then the result follows from \eqref{prop.main-res}.
\end{proof}

\subsection{}  \label{ss.detailspf}
With the notation and conditions of \eqref{thm.main}.
We will write $\inthomeff$ for the internal $\Hom$-functor
in $\DMeff{k}$.

\subsubsection{}  \label{sss.higherCHgps}
We observe that for every $Z\in Sm_k$, $r\in Z$:
\begin{align*}
   \Hom_{\DMeff{k}}(M(Z)[r], \inthomeff (\sphere (n)[2n], M(Y)))\cong
   CH^{d-n}(Y\times Z,r)
\end{align*}
where the groups on the right are Bloch's higher Chow groups \cite{MR0852815}.  In fact, this follows by adjointness and
combining Poincar\'e duality \cite{MR1764202}*{Thm. 4.3.7},
\cite{MR2399083}*{Prop. 6.7.1 and \S 6.7.3} with \cite{MR1883180}.
Thus, we conclude that for $r<0$:
$\mathbf h _r(\inthomeff (\sphere (n)[2n], M(Y)))=0 \in \HINST{k}$
\eqref{ss.nt.htstr}.  Hence:
\begin{align}   \label{diag.inhomeff}
\inthomeff (\sphere (n)[2n], M(Y)) \in (\DMeff{k})_{\geq 0}.
\end{align}

\subsubsection{}
We claim that $f_0(M(Y)(-n)[-2n]) \in (\DMeff{k})_{\geq 0}$
\eqref{ss.nt.htstr}.
In effect, combining \cite{MR2600283}*{Lem. 5.9}, 
\cite{MR2249535}*{Prop. 1.1} with \cite{MR3614974}*{3.3.3(2)} we
deduce that:
\begin{align*}
f_0(M(Y)(-n)[-2n])\cong f_n(M(Y))(-n)[-2n]
\cong \inthomeff (\sphere (n)[2n], M(Y)),
\end{align*}
so, the assertion follows from \eqref{diag.inhomeff}.

Thus, we obtain the following distinguished triangle in $\DMeff{k}$
\eqref{ss.nt.htstr}:
\begin{align}  \label{diag.dtria.inthom}
\xymatrix{\tau _{\geq 1}E \ar[r]^-{t_1}& 
f_0(M(Y)(-n)[-2n])=E \cong \tau_{\geq 0}E \ar[r]^-{\sigma _0}
& \mathbf h _0 E .}
\end{align}
Now, recall that $\mathcal A$ is the functor of points of the Albanese
variety of $X$, so, $\mathcal A \in \HINST{k}$ \eqref{ss.nt.htstr},
and we conclude that
$\Hom _{\DMeff{k}}(\tau _{\geq 1}E, \mathcal A) =0= 
\Hom _{\DMeff{k}}(\tau _{\geq 1}E[1], \mathcal A)$, \eqref{diag.dtria.inthom}.

Therefore, there exists a unique map, $e$,
such that the following diagram in $\DM{k}$
commutes \eqref{diag.mainthm}, \eqref{diag.dtria.inthom}:
\begin{align}  \label{diag.evalinto-ab}
\begin{split}
\xymatrix{ f_0(M(Y)(-n)[-2n])=E \ar[d]_-{\epsilon _0} \ar[rr]^-{\sigma _0}
&& \mathbf h _0 E \ar[d]^-e \\
M(Y)(-n)[-2n] \ar[r]_-{\Lambda _X}
& M(X) \ar[r]_-{\mathcal A _f}& \mathcal A}
\end{split}
\end{align}
and we deduce that:
\subsubsection{}  \label{sss.int-step}
$\mathcal A _f \circ \Lambda _X \circ \epsilon _0 =0$
if and only if $e=0$.

Moreover,
on the one hand,
combining Lieberman's lemma \cite{MR1644323}*{Prop. 16.1.1} with
\cite{MR1764202}*{Prop. 2.1.4, Thm. 3.2.6},  we conclude that
$\Lambda _X$ \eqref{diag.evalinto-ab}
induces the following map of abelian groups:
\begin{align} \label{action.corr-dual}
\begin{split}
\xymatrix@R=0.1pc{
\Hom _{\DM{k}}(\sphere , M(Y)(-n)[-2n]) \ar[r]^-{\Lambda _{X \ast}}& 
\Hom_{\DM{k}}(\sphere , M(X))\\
\wr  ||& \wr ||\\
CH^{d_Y-n}(Y) & CH^d(X)\\
\gamma \ar@{|->}[r]&  
\Lambda (\gamma)}
\end{split}
\end{align}

On the other hand,
we claim that $\mathcal A _f$ \eqref{diag.evalinto-ab}
induces the following map of abelian groups \eqref{def.reg.hom},
\eqref{ss.results}:
\begin{align} \label{action.alb-map}
\begin{split}
\xymatrix@R=0.1pc{
\Hom _{\DM{k}}(\sphere , M(X)) \ar[r]^-{\mathcal A _{f \ast}}& 
\Hom_{\DM{k}}(\sphere , \mathcal A )\\
\wr  ||& \wr ||\\
CH^{d}(X) & \mathrm{Alb}(X)(k)\\
\alpha \ar@{|->}[r]&  
\mathrm{alb}_X (\alpha -\mathrm{deg}(\alpha)[x_0])}
\end{split}
\end{align}
In effect, by construction 
$\mathcal A _f \in \Hom_{\DMeff{k}}(M(X), \mathcal A)
\cong \Gamma (X, \mathcal A)$ \eqref{ss.nt.abvar} classifies the canonical map
$f:X\rightarrow \mathrm{Alb}(X)$ such that $f(x_0)=0$
\eqref{ss.results}, which implies that
$\mathcal A _{f\ast}$ \eqref{action.alb-map}  restricted to $CH^d_{\mathrm{alg}}(X)
\subseteq CH^d(X)$ is given by the Albanese map
$\mathrm{alb}_X:CH^d_{\mathrm{alg}}(X)\rightarrow \mathrm{Alb}(X)(k)$
and that $\mathcal A _{f \ast}(\mathbb Z [x_0])=0$.  Then, the claim
\eqref{action.alb-map} follows from the splitting
$CH^d(X)\cong CH^d_{\mathrm{alg}}(X)\oplus \mathbb Z$,
$\alpha \mapsto (\alpha -\mathrm{deg}(\alpha)[x_0], \mathrm{deg}(\alpha))$.

\subsubsection{}
Then,
combining \eqref{action.corr-dual} and \eqref{action.alb-map} 
we conclude that the composition
$\mathcal A _f \circ \Lambda _X$ \eqref{diag.evalinto-ab} 
induces the following map
of abelian groups:
\begin{align} \label{action.corr-alb}
\begin{split}
\xymatrix@R=0.1pc{
\Hom _{\DM{k}}(\sphere , M(Y)(-n)[-2n]) \ar[r]^-{(\mathcal A _f \circ
\Lambda _X )_{\ast}}& 
\Hom_{\DM{k}}(\sphere , \mathcal A)\\
\wr  ||& \wr ||\\
CH^{d_Y-n}(Y) & \mathrm{Alb}(X)(k)\\
\gamma \ar@{|->}[r]&  
\mathrm{alb}_X(\Lambda (\gamma )-\mathrm{deg} 
(\Lambda (\gamma ))[x_0])}
\end{split}
\end{align}

\begin{rmk}
Combining the bottom row of \eqref{diag.evalinto-ab} with
\eqref{action.corr-alb} we obtain 
a motivic presentation for cycle induced regular homomorphisms
\eqref{def.reg.hom}.
\end{rmk}

\begin{prop}  \label{prop.main-res}
With the notation and conditions of \eqref{thm.main},
\eqref{ss.detailspf} and \eqref{sss.higherCHgps}-\eqref{action.corr-alb}.
Then $\mathcal A _f \circ \Lambda _X \circ \epsilon _0 =0$ 
\eqref{diag.evalinto-ab}
if and only if the induced map $(\mathcal A _f \circ \Lambda _X)_\ast=0$
\eqref{action.corr-alb}.
\end{prop}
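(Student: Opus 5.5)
By \eqref{sss.int-step}, $\mathcal A_f\circ\Lambda_X\circ\epsilon_0=0$ if and only if the map $e:\mathbf h_0E\rightarrow\mathcal A$ of \eqref{diag.evalinto-ab} vanishes, where $E=f_0(M(Y)(-n)[-2n])$. Both $\mathbf h_0E$ and $\mathcal A$ lie in the heart $\HINST{k}$, and $k$ is algebraically closed. So by \eqref{prop.kpts.sep}, $e=0$ if and only if the induced map on sections $e_k:\Gamma(\spec{k},\mathbf h_0E)\rightarrow\Gamma(\spec{k},\mathcal A)=\mathrm{Alb}(X)(k)$ is zero. The plan is therefore to identify $e_k$ with $(\mathcal A_f\circ\Lambda_X)_\ast$ from \eqref{action.corr-alb}, up to a surjection on the source.

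First I compute $\Gamma(\spec{k},\mathbf h_0E)$. For any $\mathcal F\in\HINST{k}$ we have $\Gamma(\spec{k},\mathcal F)=\Hom_{\DMeff{k}}(\sphere,\mathcal F)$. Since $E\in(\DMeff{k})_{\geq 0}$ and $\sphere=M(\spec{k})$, applying $\Hom(\sphere,-)$ to the triangle \eqref{diag.dtria.inthom} gives an exact sequence
\begin{align*}
\Hom(\sphere,\tau_{\geq1}E)\rightarrow\Hom(\sphere,E)\xrightarrow{\sigma_{0\ast}}\Hom(\sphere,\mathbf h_0E)\rightarrow\Hom(\sphere,\tau_{\geq 1}E[1]).
\end{align*}
The last group vanishes: $\tau_{\geq1}E[1]\in(\DMeff{k})_{\geq 2}$, and $\Hom(M(\spec{k}),-)$ computes Nisnevich hypercohomology of a point, which only sees $\mathbf h_0$ in degree $0$. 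Concretely, $\Hom(\sphere,\mathcal G[j])=0$ for $j\geq1$ when $\mathcal G\in(\DMeff{k})_{\geq1}$, using the $t$-structure and cohomological dimension $0$ of $\spec{k}$. Hence $\sigma_{0\ast}$ is surjective. Alternatively, \eqref{sss.higherCHgps} identifies $\Hom(\sphere,E)=CH^{d_Y-n}(Y)$ and $\Gamma(\spec{k},\mathbf h_0E)=CH^{d_Y-n}(Y\times\spec{k},0)$, and $\sigma_{0\ast}$ is the identity.

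Next I use that $\epsilon_0$ induces an isomorphism
\begin{align*}
\epsilon_{0\ast}:\Hom_{\DM{k}}(\sphere,E)\overset{\cong}{\rightarrow}\Hom_{\DM{k}}(\sphere,M(Y)(-n)[-2n]).
\end{align*}
This holds because $\sphere\in\DMeff{k}(0)$ and $(i_0,r_0)$ is an adjunction, and both sides are $CH^{d_Y-n}(Y)$. Commutativity of \eqref{diag.evalinto-ab} gives $e_\ast\circ\sigma_{0\ast}=(\mathcal A_f\circ\Lambda_X)_\ast\circ\epsilon_{0\ast}$ on $\Hom(\sphere,E)$. Since $\sigma_{0\ast}$ is surjective and $\epsilon_{0\ast}$ is bijective, $e_k=e_\ast=0$ if and only if $(\mathcal A_f\circ\Lambda_X)_\ast=0$. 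Combining this with \eqref{prop.kpts.sep} and \eqref{sss.int-step} proves the proposition.

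The main obstacle is the surjectivity of $\sigma_{0\ast}$, which amounts to the vanishing $\Hom(\sphere,\tau_{\geq1}E[1])=0$. I would handle it via the identification of $\Hom_{\DMeff{k}}(\sphere,-)$ with hypercohomology over $\spec{k}$, a field with trivial Nisnevich cohomological dimension. Then $\Hom(\sphere,\mathcal G[1])=H^{-1}$-type groups of the homotopy sheaves of $\mathcal G$, which vanish for $\mathcal G\in(\DMeff{k})_{\geq1}$. If that route is awkward, I would instead use \eqref{sss.higherCHgps}: it gives $\Hom(\sphere,E[1])\cong CH^{d_Y-n}(Y,-1)=0$ for $r=-1$, and combined with the long exact sequence this yields the needed vanishing.
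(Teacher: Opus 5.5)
Your argument is correct and is essentially the paper's proof. You reduce to $e=0$ via \eqref{sss.int-step} and \eqref{prop.kpts.sep}, use that $\epsilon_{0\ast}$ is bijective by the adjunction $(i_0,r_0)$, and use that $\sigma_{0\ast}$ is onto (the paper notes it is bijective) because $\spec{k}$ is a point for the Nisnevich topology; the only cosmetic difference is that the paper gets ($\Rightarrow$) directly from the bijectivity of $\epsilon_{0\ast}$ without passing through $e$. Your fallback via $CH^{d_Y-n}(Y,-1)=0$ would not work on its own: the vanishing of $\Hom(\sphere,E[1])$ only makes the connecting map $\Hom(\sphere,\mathbf h_0E)\rightarrow\Hom(\sphere,\tau_{\geq 1}E[1])$ surjective rather than killing its target, but the point argument you give first already does the job.
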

\begin{proof}
$(\Rightarrow)$:  By the universal property of the counit $\epsilon _0:
 f_0\rightarrow id$ \cite{MR3614974}*{3.3.1}, we deduce that the induced
 map 
\begin{align*}
\epsilon _{0 \ast}: \Hom_{\DM{k}}(\sphere ,f_0(M(Y)(-n)[-2n])) 
\overset{\cong}{\rightarrow}
\Hom_{\DM{k}}(\sphere , M(Y)(-n)[-2n])
\end{align*}
is an isomorphism,
since $\sphere \in \DMeff{k}$.  Thus, we conclude that
$(\mathcal A _f \circ \Lambda _X)_\ast=0$ \eqref{action.corr-alb}, since 
$\mathcal A _f \circ \Lambda _X \circ \epsilon _0 =0$.

$(\Leftarrow)$:  Since \eqref{diag.evalinto-ab} commutes and
$(\mathcal A _f \circ \Lambda _X)_\ast=0$ \eqref{action.corr-alb},
we deduce that the following composition of maps of abelian groups
is zero: 
\begin{align*}  
\xymatrix{\Hom_{\DM{k}}(\sphere, E)  \ar[rr]^-{\sigma _{0 \ast}}_-\cong
 \ar[drr]_-0
&& \Hom_{\DM{k}}(\sphere, \mathbf h _0 E) \ar[d]^-{e_\ast} \\
&& \Hom_{\DM{k}}(\sphere , \mathcal A)\cong \mathrm{Alb}(X)(k)}
\end{align*} 
where $E=f_0(M(Y)(-n)[-2n])$ \eqref{diag.evalinto-ab}.  Now, we
observe that
$\sigma _{0\ast}$ is an isomorphism, since \eqref{diag.dtria.inthom} is
a distinguished triangle in $\DMeff{k}$ and
$\spec{k}$ is a point for the Nisnevich topology \eqref{ss.nt.htstr}.
Thus, it follows that $e_{\ast}=0$, so, by combining
\cite{MR1764202}*{Prop. 3.1.9 and 3.2.3} with
\eqref{prop.kpts.sep} we deduce that $e=0$ \eqref{diag.evalinto-ab}.
Then the result follows from \eqref{sss.int-step}.
\end{proof}

\subsection{}  \label{ss.alb-ker}
With the notation and conditions of \eqref{ss.results}.
We will write $T(X)\subseteq CH^d_{\mathrm{alg}}(X)$ 
for the Albanese kernel of $X$, i.e. for the kernel of the Albanese map
$\mathrm{alb}_X: CH^d_{\mathrm{alg}}(X)\rightarrow \mathrm{Alb}(X)(k)$.

\begin{thm}  \label{thm.square-eq}
With the notation and conditions of \eqref{ss.results} and
\eqref{diag.mot-alb}-\eqref{diag.mainthm}.
Furthermore, we assume that $\Lambda \in CH^{d+n}_{\mathrm{alg}
^{\ast 2}}
(Y\times X)$ \eqref{sss.prod-eqrel}.
Then,
there exist a map $\Lambda ' : M(Y)(-n)[-2n]\rightarrow
bc _{\leq -1}(\mathcal A)$ in $\DM{k}$
such that \eqref{diag.mainthm} commutes.
\end{thm}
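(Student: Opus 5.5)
By Theorem \eqref{thm.main}, it suffices to verify condition \eqref{sss.main-cond}: for every $\gamma \in CH^{d_Y-n}(Y)$ we need $\mathrm{alb}_X(\Lambda(\gamma)-\mathrm{deg}(\Lambda(\gamma))[x_0])=0$. The plan is to show two things. First, $\Lambda(\gamma)\in CH^d_{\mathrm{alg}}(X)$, so that $\mathrm{deg}(\Lambda(\gamma))=0$. Second, $\Lambda(\gamma)$ lies in the Albanese kernel $T(X)$ of \eqref{ss.alb-ker}.

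Since $\mathrm{alg}^{\ast 2}$ is an adequate equivalence relation contained in $\mathrm{alg}$, the action of a correspondence sends cycles to cycles. So I would reduce to the generators of $CH^{d+n}_{\mathrm{alg}^{\ast 2}}(Y\times X)$ described in \eqref{sss.prod-eqrel}. Concretely, take
\[
\Lambda = \sum_i p_{Y\times X\,\ast}(\beta_i\cdot\gamma_i),
\]
with $\beta_i,\gamma_i$ algebraically equivalent to zero on $W\times Y\times X$. By linearity it is enough to treat a single term $\Lambda = p_\ast(\beta\cdot\beta')$ with $\beta,\beta'\in CH_{\mathrm{alg}}$. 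Then
\[
\Lambda(\gamma)=p_{X\ast}\bigl(\beta\cdot\beta'\cdot q^\ast\gamma\bigr)
\]
for the projection $q$ to $Y$. This again has the form $p_{X\ast}(\beta\cdot\beta'')$ with $\beta''=\beta'\cdot q^\ast\gamma$ algebraically trivial, because $CH_{\mathrm{alg}}$ is an ideal. Hence $\Lambda(\gamma)\in CH^d_{\mathrm{alg}^{\ast 2}}(X)$, and in particular it is algebraically trivial of degree zero.

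The remaining step is the classical fact that zero-cycles square equivalent to zero lie in $T(X)$. This is the main obstacle. I would argue as follows. Write $\alpha=p_{X\ast}(\beta\cdot\beta')$ on $Z\times X$, where $\beta,\beta'$ are algebraically trivial. Algebraic triviality means there are smooth projective curves $C,C'$ (or varieties), points $c_0,c_1$, and cycles $B$ on $C\times Z\times X$ with $\beta = B(c_1)-B(c_0)$; similarly for $\beta'$. Then bilinearity gives
\[
\alpha=\Gamma([c_1]-[c_0])\bigl([c'_1]-[c'_0]\bigr)\text{-type expression},
\]
that is, $\alpha$ is the image under a correspondence from $C\times C'$ to $X$ of the zero-cycle $([c_1]-[c_0])\times([c'_1]-[c'_0])$. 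Regularity of $\mathrm{alb}_X$ \eqref{def.reg.hom} then gives a morphism $\phi: C\times C'\to \mathrm{Alb}(X)$ with $\phi(c,c')=\mathrm{alb}_X(\Gamma([c]\times[c']-[c_0]\times[c'_0]))$. By the rigidity lemma, any morphism from a product of varieties to an abelian variety is a sum $g(c)+h(c')$. Hence
\[
\phi(c_1,c'_1)-\phi(c_1,c'_0)-\phi(c_0,c'_1)+\phi(c_0,c'_0)=0,
\]
which is exactly $\mathrm{alb}_X(\alpha)$. So $\alpha\in T(X)$.

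Combining these steps, condition \eqref{sss.main-cond} holds for every $\gamma$, and Theorem \eqref{thm.main} yields the desired $\Lambda'$. As an alternative, one could instead use $CH^\ast_{\mathrm{alg}^{\ast 2}}\subseteq CH^\ast_{H^{\ast 2}}$ for a classical Weil cohomology together with known results. However, the rigidity argument above is self-contained, so that is the route I would take.
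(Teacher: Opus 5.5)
Your proposal is correct and follows the paper's proof. You reduce to \eqref{thm.main} and note that $\Lambda(\gamma)\in CH^d_{\mathrm{alg}^{\ast 2}}(X)\subseteq CH^d_{\mathrm{alg}}(X)$, so it has degree zero; the paper gets this membership directly from adequacy, while you re-derive it via the projection formula. You then show $\Lambda(\gamma)\in T(X)$. The only real difference is that the paper cites Samuel's Proposition 11 for this last step. You instead prove it from the regularity of $\mathrm{alb}_X$ on $C\times C'$ and the rigidity decomposition $\phi(c,c')=\phi(c,c'_0)+\phi(c_0,c')-\phi(c_0,c'_0)$ for complete $C,C'$, which is a sound, self-contained version of that classical argument.
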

\begin{proof}
By \eqref{thm.main} it suffices to show that \eqref{sss.main-cond} holds.
Consider $\gamma \in CH^{d_Y-n}(Y)$.  Since 
$\Lambda \in CH^{d+n}_{\mathrm{alg}^{\ast 2}}
(Y\times X)$ we conclude that 
$\Lambda(\gamma) \in CH^d _{\mathrm{alg}^{\ast 2}}(X)\subseteq 
CH^d _{\mathrm{alg}}(X)$ \eqref{diag.HSai-fil}, so,
$\mathrm{deg}(\Lambda (\gamma))=0$.

	Hence, it is enough to see that $\Lambda (\gamma) \in T(X)$
\eqref{ss.alb-ker}, the Albanese kernel of $X$, which follows from
\cite{MR116010}*{p. 479, Prop. 11} since $\Lambda(\gamma) \in CH^d _{\mathrm{alg}^{\ast 2}}(X)$.
\end{proof}

If we restrict to the case $k=\mathbb C$, then
it is possible to strengthen \eqref{thm.square-eq}:

\begin{thm}  \label{thm.square-homeq}
With the notation and conditions of \eqref{ss.results} and
\eqref{diag.mot-alb}-\eqref{diag.mainthm}.
Furthermore, we assume that $\Lambda \in CH^{d+n}_{H^{\ast 2}}
(Y \times X)$ \eqref{sss.HSai-fil}-\eqref{diag.HSai-fil} and that
$k=\mathbb C$.
Then,
there exist a map $\Lambda ' : M(Y)(-n)[-2n]\rightarrow
bc _{\leq -1}(\mathcal A)$ in $\DM{k}$
such that \eqref{diag.mainthm} commutes.
\end{thm}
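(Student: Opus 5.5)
As in the proof of \eqref{thm.square-eq}, I reduce via \eqref{thm.main} to checking condition \eqref{sss.main-cond}. Fix $\gamma \in CH^{d_Y-n}(Y)$. Since $H^{\ast 2}$ is an adequate equivalence relation and $\Lambda \in CH^{d+n}_{H^{\ast 2}}(Y\times X)$, adequacy \eqref{ss.ad.eq.rel} (the transpose of $\Lambda$ acting on $\gamma$) gives
\begin{align*}
\Lambda(\gamma)\in CH^d_{H^{\ast 2}}(X)\subseteq CH^d_H(X)
\end{align*}
by \eqref{diag.HSai-fil}. A homologically trivial zero-cycle has trivial degree, because its class in $H^{2d}(X)$ determines the degree. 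Hence $\mathrm{deg}(\Lambda(\gamma))=0$, and it remains to prove $\mathrm{alb}_X(\Lambda(\gamma))=0$. This requires knowing that $\Lambda(\gamma)$ is algebraically trivial, which holds for zero-cycles over $\mathbb C$: homological and algebraic equivalence agree on $CH^d$, since the degree-zero zero-cycles are algebraically trivial. So the real content is that $\Lambda(\gamma)$ lies in the Albanese kernel $T(X)$ \eqref{ss.alb-ker}.

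To show $CH^d_{H^{\ast 2}}(X)\subseteq T(X)$, I unwind the definition \eqref{sss.prod-eqrel}. We can write $\Lambda(\gamma)=\sum_i p_{X\ast}(\beta_i\cdot\gamma_i)$ with $Z\in SmProj_{\mathbb C}$ and $\beta_i\in CH^{n_i}_H(Z\times X)$, $\gamma_i\in CH^{m_i}_H(Z\times X)$, where $n_i+m_i=d+d_Z$. The plan is to use the Abel--Jacobi description of the Albanese map over $\mathbb C$. The map $\mathrm{alb}_X$ on degree-zero cycles is the Abel--Jacobi map into $J^d(X)=H^{2d-1}(X,\mathbb C)/(F^d+H^{2d-1}(X,\mathbb Z))$.

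The key input is a known functoriality of the Abel--Jacobi map, due to H.~Saito (and in the spirit of Murre, Jannsen \cite{MR1189891}, \cite{MR1744947}): Abel--Jacobi maps kill $H^{\ast 2}$. Concretely, in Deligne cohomology, for homologically trivial $\beta$ and $\gamma$ the product $\mathrm{cl}_{\mathcal D}(\beta)\cdot\mathrm{cl}_{\mathcal D}(\gamma)$ factors through the product of two classes in intermediate Jacobians. Such a product is zero, because the cup product of classes in $\mathrm{Ext}^1$ of mixed Hodge structures lands in $\mathrm{Ext}^2$, which vanishes for mixed Hodge structures. Alternatively, one can use that $\mathrm{Ext}^1(\mathbb Z, \cdot)\times\mathrm{Ext}^1(\mathbb Z,\cdot)\to \mathrm{Ext}^1$ is zero on the relevant pieces. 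Compatibility of Deligne cycle classes with $p_{X\ast}$ then gives $AJ(p_{X\ast}(\beta_i\cdot\gamma_i))=0$. Concluding the argument requires $J^d(X)\cong\mathrm{Alb}(X)$ and $AJ=\mathrm{alb}_X$ on $CH^d_{\mathrm{alg}}(X)$ over $\mathbb C$.

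The main obstacle is this Deligne-cohomology vanishing step: showing rigorously that the Deligne class of a product of two homologically trivial cycles has vanishing Abel--Jacobi image. My first attempt is to use the exact sequence $0\to J^p\to H^{2p}_{\mathcal D}\to \mathrm{Hdg}^{2p}\to 0$ and multiplicativity of Deligne classes, so that $\mathrm{cl}_{\mathcal D}(\beta)\cdot\mathrm{cl}_{\mathcal D}(\gamma)$ lies in the product of two images of intermediate Jacobians. Products $J\cdot J\subseteq H_{\mathcal D}$ vanish because the product in Deligne cohomology of two classes coming from $H^{\ast}(X,\mathbb C)/F$ is computed by a cup product involving a factor from $F^0$-truncated complexes that is trivial. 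If that becomes messy, I will instead cite Saito's result directly from \cite{MR1189891}. It states that $H^{\ast 2}$-equivalent zero-cycles are in the kernel of every Abel--Jacobi map, hence $CH^d_{H^{\ast 2}}(X)\subseteq T(X)$, completing the verification of \eqref{sss.main-cond}.
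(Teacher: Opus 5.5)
\paragraph{Comparison with the paper's proof.} Your reduction is the paper's. Theorem \ref{thm.main} reduces the statement to \eqref{sss.main-cond}. Then $\Lambda(\gamma)\in CH^d_{H^{\ast 2}}(X)$; strictly, adequacy is applied to $\Lambda$ through the correspondence $\gamma\times\Delta_X$, not to $\gamma$. So $\Lambda(\gamma)$ has degree zero and is algebraically trivial. Everything then rests on the inclusion $CH^d_{H^{\ast 2}}(X)\cap CH^d_{\mathrm{alg}}(X)\subseteq T(X)$, which the paper takes from \cite{MR1189891}*{Rmk. 6.2}; that is exactly your fallback.

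\paragraph{The gap in the Deligne-cohomology step.} To place $\mathrm{cl}_{\mathcal D}(\beta_i)$ and $\mathrm{cl}_{\mathcal D}(\gamma_i)$ in intermediate Jacobians, you need their \emph{integral} Betti classes to vanish. But $H$ is a Weil cohomology with coefficients in a field of characteristic zero, so membership in $CH_H$ only makes these integral classes torsion. Then $\mathrm{cl}_{\mathcal D}(\beta_i)$ need not lie in $J$. The vanishing $J\cdot J=0$ is correct, but it only shows that $\mathrm{alb}_X(\Lambda(\gamma))$ is a torsion point.

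\paragraph{The inclusion is false integrally.} This cannot be patched, because with integral Chow groups the inclusion fails. Take $H$ to be rational Betti cohomology and $S$ a general complex Enriques surface. Let $F\subset S$ be a smooth half-fibre of an elliptic pencil, $E=F$, and $X=S\times E$; then $\mathrm{Alb}(X)=E$ since $q(S)=0$. Let $\beta=p_S^\ast K_S\in CH^1(X)$, which is $2$-torsion. Let $\gamma=[\Gamma]-[F\times\{0\}]-[\{s_0\}\times E]\in CH^2(X)$, with $\Gamma$ the image of $x\mapsto (x,x)$.
\begin{itemize}
\item Since $H^3(S)=0$, a class in $H^4(X)$ is detected by $p_{S\ast}$ and $p_{E\ast}$, and both kill $\mathrm{cl}(\gamma)$. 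Hence $\beta\cdot\gamma\in CH^3_{H^{\ast 2}}(X)$ (take $Y=\spec{k}$ in \eqref{sss.prod-eqrel}).
\item By the projection formula, $p_{E\ast}(\beta\cdot\gamma)$ is the class of $K_S|_F\cong\mathcal O_F(F)$; the other two terms vanish.
\item $\mathcal O_F(F)$ is nontrivial because $h^0(\mathcal O_S(F))=1$ and $H^1(\mathcal O_S)=0$.
\end{itemize}
So $\mathrm{alb}_X(\beta\cdot\gamma)\neq 0$, and the data $Y=\spec{k}$, $n=0$, $\Lambda=\beta\cdot\gamma$ violate \eqref{sss.main-cond}. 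Since the paper's proof uses the same inclusion, the objection applies to it too.

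\paragraph{What does work.} Your Deligne argument is a valid self-contained proof once $CH_H$ is replaced by integral homological equivalence. It uses multiplicativity and push-forward compatibility of $\mathrm{cl}_{\mathcal D}$, the vanishing $J^p\cdot J^q=0$, and $\mathrm{AJ}=\mathrm{alb}_X$. With an $H$ having characteristic-zero coefficients, one only obtains the conclusion up to torsion.
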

\begin{proof}
By \eqref{thm.main} it suffices to show that \eqref{sss.main-cond} holds.
Consider $\gamma \in CH^{d_Y-n}(Y)$.  Since 
$\Lambda \in CH^{d+n}_{H^{\ast 2}}
(Y\times X)$ we conclude that 
$\Lambda(\gamma) \in CH^d _{H^{\ast 2}}(X)\subseteq 
CH^d _{H}(X)$ \eqref{diag.HSai-fil}.  Now, we recall that for zero-cycles,
algebraic, homological and numerical equivalence coincide, so,
we deduce that
$\mathrm{deg}(\Lambda (\gamma))=0$ and that $\Lambda (\gamma)\in
CH^d_{\mathrm{alg}}(X)$.
	Hence, it is enough to show that $\Lambda (\gamma) \in T(X)$
\eqref{ss.alb-ker}, the Albanese kernel of $X$, which follows from
\cite{MR1189891}*{Rmk. 6.2} since $\Lambda (\gamma) \in
CH^d _{H^{\ast 2}}(X) \cap CH^d_{\mathrm{alg}}(X)$.
\end{proof}

\begin{rmk}
The condition $\Lambda \in CH^{d+n}_{\mathrm{alg}^{\ast 2}}
(Y\times X)$ in \eqref{thm.square-eq} is sharp in the following sense:
there exist $\Lambda \in CH^{d+n}_{\mathrm{alg}}(Y\times X)$ such that
\eqref{thm.square-eq} does not hold.  Namely, consider
$\alpha \in CH^d_{\mathrm{alg}}(X)$ such that $\mathrm{alb}_X(\alpha)
\in \mathrm{Alb}(X)(k)$ is a non-torsion element
and $\beta \in CH^n(Y)$  which is not numerically equivalent to zero, i.e.
there exists $\gamma _0 \in CH^{d_Y-n}(Y)$ such that
$\mathrm{deg}(\beta \cdot \gamma _0)\neq 0$.  We claim that
\eqref{thm.square-eq} does not hold for the exterior product
$\Lambda =\beta \otimes \alpha   \in CH^{d+n}_{\mathrm{alg}}(Y\times X)$.

In effect, by the projection formula:
$\Lambda (\gamma _0) =\mathrm{deg}(\beta \cdot \gamma_0)\alpha$, so,
$\mathrm{deg}(\Lambda (\gamma _0))=\mathrm{deg}(\beta \cdot \gamma_0)\cdot \mathrm{deg}(\alpha)=0$,
since
$\alpha \in CH^d_{\mathrm{alg}}(X)$.
Then, $\mathrm{alb}_X(\Lambda (\gamma _0) -\mathrm{deg}(\Lambda
(\gamma _0))[x_0])= \mathrm{alb}_X(\Lambda (\gamma _0))=
\mathrm{deg}(\beta \cdot \gamma_0)\cdot \mathrm{alb}_X(\alpha)$,
which is a non-zero element in $\mathrm{Alb}(X)(k)$ by the choice
of $\alpha$.  Thus, the
 condition \eqref{sss.main-cond} does not hold, so,
\eqref{thm.square-eq}
does not hold as well by \eqref{thm.main}.
\end{rmk}

%%%%%%%%%%%%%%%%%%%%%%%%%%%%%%%%%%%%%%%%%%%%
%%%%%%%%%%%%%%%%%%%%%%%%%%%%%%%%%%%%%%%%%%%%
%%%%%%%%%%%%%%%%%%%%%%%%%%%%%%%%%%%%%%%%%%%%

\bibliography{biblio_chowcor}
\bibliographystyle{abbrv}

\end{document}